\theoremstyle{plain}
\newtheorem{theorem}{Theorem}[section]
\newtheorem{lemma}{Lemma}[section]
\newtheorem{corollary}{Corollary}[section]
\newtheorem{definition}{Definition}[section]
\theoremstyle{definition}
\newtheorem{example}{Example}[section]
\newtheorem{remark}{Remark}[section]
\newcommand{\keywords}{\textbf{Key words. }\medskip}
\newcommand{\subjclass}{\textbf{MSC 2020. }\medskip}
\renewcommand{\abstract}{\textbf{Abstract. }\medskip}
\numberwithin{equation}{section}
\newcommand{\RR}{\mathbb{R}}
\begin{document}

\title{A note on generalized four-point inequality}

\author{Evgeniy A. Petrov, Ruslan R. Salimov}

\date{}

\maketitle

\begin{abstract}
In 2017 M. Bessenyei and Z. P\'ales~\cite{BP17} introduced a definition of a triangle function which generates a concept of a generalized triangle inequality in semimetric spaces. Inspired by this concept we discuss already known inequalities in metric spaces that relate the six distances determined by four points  and introduce a definition of a generalized four-point inequality in semimetric spaces. Conditions under which quasisymmetric mappings and quasim\"{o}bius mappings between semimetric spaces preserve such inequality are obtained.
\end{abstract}

\subjclass{54E25, 54C25}

\keywords{quasisymmetric mapping, semimetric space, triangle function, four-point inequality}


\section{Introduction}
The fundamental concept of metric space was introduced by M. Fr\'{e}chet~\cite{Fr06} in 1906. Fr\'{e}chet called the discovered spaces ``classes (D)'' (from the word ``distance''). In 1914 F.~Hausdorff~\cite{Ha14} introduced the term ``metric space''  considering these spaces as a special case of infinite topological spaces. Later there were many attempts to generalize the concept of metric space. One of the directions of such generalization is the generalization of the concept of triangle inequality.  For example, if the usual triangle inequality in the metric space $(X,d)$ is replaced by $d(x,y)\leqslant K(d(x,z)+d(z,y))$, $K\geqslant 1$, then  $(X,d)$ is called a \emph{b-metric space}.
This relaxed triangle inequality and the corresponding spaces were rediscovered several times under various names, see~\cite{CC20} and references therein.
An another example is so-called power triangle inequality considered by D.~J. Greenhoe~\cite{G16}.
The next important for us generalization is a concept of a triangle function~(\ref{e21}) introduced by  M. Bessenyei and Z. P\'ales~\cite{BP17} in 2017, see Definition~\ref{d21} below. This definition allows to study metric spaces from more general positions easily obtaining corollaries for special types of metric spaces by an appropriate choice of the function $\Phi$. On the other hand, in the study of metric spaces, inequalities that relate the six distances defined by four points are very often considered. The most well-known of them are Ptolemy's inequality~(\ref{e270}), additive inequality~(\ref{e61}), inequality which determines $\delta$-hyperbolic metric~(\ref{e62}), etc. The aim of this paper is to introduce a four-point analog of triangle function~(\ref{e21}) and to demonstrate a technique of working with this generalization on the example of problems related to preservation of four-point inequalities by quasisymmetric and quasim\"{o}bius mappings.

Recall the necessary definitions. Let $X$ be a nonempty set. A mapping  $d\colon X\times X\to \mathbb{R}^+$, $\mathbb{R}^+=[0,\infty)$ is a \emph{metric} if for all $x,y,z \in X$ the following axioms hold:
\begin{itemize}
  \item [(i)] $(d(x,y)=0)\Leftrightarrow (x=y)$,
  \item [(ii)] $d(x,y)=d(y,x)$,
  \item [(iii)] $d(x,y)\leqslant d(x,z)+d(z,y)$.
\end{itemize}
The pair $(X,d)$ is called a \emph{metric space}. If only axioms (i) and (ii) hold then $d$ is called a \emph{semimetric}. A pair $(X,d)$, where  $d$  is a semimetric on $X$, is called a \emph{semimetric space}.
Such spaces were first examined by Fr\'{e}chet in~\cite{Fr06}, where he called them ``classes (E)''. Later these spaces attracted the attention of many mathematicians ~\cite{Ch17,Ni27,Wi31,Fr37, BD12}.

Note that in the literature a different terminology is used.
Sometimes a semimetric space is called a distance space~\cite{G16}; a semimetric also can be called a dissimilarity~\cite{DF98}. In~\cite[Chapter 10]{KV88}, the topological space $(X, \tau_d)$ with the topology generated by $d$ is called a symmetric space, whereas a semimetric space means a symmetric space in which all open balls are neighbourhoods.
In our paper we inherit the terminology from Wilson's pioneering paper~\cite{Wi31}, as well as it is adopted in a well-known Blumenthal's work~\cite[p.~7]{Bl53} and many recent papers, e.g.,~\cite{DP13, BP17, DH17, KS15}.

M. Bessenyei and Z. P\'ales~\cite{BP17} in the definition of a triangle function took $\overline{\RR}_+^2$ and $\overline{\RR}^+$ as the domain and the range of $\Phi$, respectively.
We use this definition in a slightly different form restricting the domain and the range of $\Phi$ by ${\RR}_+^2$ and ${\RR}^+$, respectively.

\begin{definition}\label{d21}
Consider a semimetric space $(X, d)$. We say that $\Phi \colon {\RR}^+\times{\RR}^+ \to {\RR}^+$ is a \emph{triangle function} for $d$ if $\Phi$ is symmetric and monotone increasing in both of its arguments, satisfies $\Phi(0,0)=0$ and, for all $x, y, z \in X$, the generalized triangle inequality
\begin{equation}\label{e21}
d(x,y)\leqslant \Phi(d(x,z), d(y,z))
\end{equation}

holds.
\end{definition}

Obviously, metric spaces are semimetric spaces with the triangle function $\Phi(u, v) = u + v$.
In~\cite{BP17} those semimetric spaces whose so-called basic triangle functions are continuous at the origin were considered. These spaces were termed regular. It was shown that the topology of a regular semimetric space is Hausdorff, that a convergent sequence in a regular semimetric space has a unique limit and possesses the Cauchy property, etc. See also~\cite{JT20,VH17,CJT18} for some new results in this direction.

In~\cite{PS22} considering quasisymmetric mappings between semimetric spaces with different triangle functions it was found  a new estimation for the ratio of diameters of two subsets, which are images of two bounded subsets. This result generalizes the well-known Tukia-V\"{a}is\"{a}l\"{a} inequality. Conditions under which quasisymmetric mappings preserve triangle functions and Ptolemy's inequality were also found.

Let us consider several already known different types of inequalities that relate the six distances determined by four points in (semi)metric space.
\subsection{Ptolemy's inequality}
A metric space $(X,d)$ is called Ptolemaic if for all $x, y, z, t \in X$ the inequality
\begin{equation}\label{e270}
d(x,z)d(t,y)\leqslant d(x,y)d(t,z)+d(x,t)d(y,z).
\end{equation}
holds, see, e.g.,~\cite{Sch40, Sch52}. Every pre-Hilbert space is Ptolemaic (see~\cite[9.7.3.8, 10.9.2]{Ber09} for instance), and each linear quasinormed Ptolemaic space is a pre-Hilbert space~\cite{Sch52}.  The Ptolemy theorem, known since antiquity, states that~(\ref{e270}) turns into equality when $x$, $y$, $z$, and $t$ are the vertices of a convex quadrilateral inscribed into a circle.
It is also known~\cite{BFW09} that CAT(0) spaces are Ptolemaic. However a
geodesic Ptolemaic space is not necessarily a CAT(0) space.
On the other hand, a metric space is CAT(0) if and only if it is Ptolemaic and Busemann convex~\cite{FLS07}. Ptolemaic spaces still attract attention of many mathematicians, see, e.g.,~\cite{As18, DP11, MS13}. See also~\cite[Chapter 10]{KS14} for the extended review on Ptolemaic spaces.

In what follows under Ptolemaic spaces we understand semimetric spaces $(X,d)$ for which inequality~(\ref{e270}) holds. Note that~(\ref{e270}) does not imply the standard triangle inequality in $(X,d)$.
\subsection{Additive  inequality}
A metric $d$ on $X$ is \emph{additive}~\cite[p.~7]{DD09} if it satisfies the following strengthened version of the triangle inequality called the four-point inequality:
\begin{equation}\label{e61}
d(x,z) + d(t,y)\leqslant \max\{d(x,y) + d(t,z), d(x,t) + d(y,z)\}
\end{equation}
for all $x$, $y$, $z$, $t\in  X$. Equivalently, among the three sums $d(x,z) + d(t,y)$, $d(x,y) + d(t,z)$, $d(x,t) + d(y,z)$ the two largest sums are equal.
The well-known Buneman's criterion~\cite[Theorem 2]{Bu74} asserts that a finite metric space is additive if and only if it is a tree metric. Note that tree metrics play an important role in phylogenetics~\cite{SS03} and hierarchical clustering~\cite{AC11}.

Recall that an ultrametric is a metric for which the strong triangle inequality $d(x,y)\leqslant \max\{d(x,z),d(z,y)\}$ holds. The class of ultrametric spaces is contained in the class of additive metric spaces.

\subsection{$\delta$-hyperbolic metric}
Given a number $\delta\geqslant 0$, a metric $d$ on a set $X$ is called $\delta$-hyperbolic if it satisfies the Gromov $\delta$-hyperbolic inequality (another weakening of
the four-point inequality):
\begin{equation}\label{e62}
d(x,z) + d(t,y)\leqslant 2\delta+ \max\{d(x,y) + d(t,z), d(x,t) + d(y,z)\}
\end{equation}
for all $x$, $y$, $z$, $t\in  X$, see~\cite[p.~8]{DD09}. A metric space $(X, d)$ is $0$-hyperbolic exactly when $d$ satisfies the four-point inequality~(\ref{e61}). Every bounded metric space of diameter $D$ is $D$-hyperbolic. The $n$-dimensional hyperbolic space is $\ln 3$-hyperbolic. Every $\delta$-hyperbolic metric space is isometrically embeddable into a
geodesic metric space~\cite{BS00}.

\subsection{Roundness of a metric space}
The roundness of a metric space $(X, d)$ is the supremum of all $q$ such
that
\begin{equation}\label{e53}
d(x_1, x_2)^q+d(y_1, y_2)^q \leqslant  d(x_1, y_1)^q+d(x_1, y_2)^q+d(x_2, y_1)^q+d(x_2, y_2)^q
\end{equation}
for any four points $x_1, x_2, y_1, y_2 \in X$, see~\cite[p. 24]{DD09}.
Every metric space has roundness $\geqslant 1$; it is $\leqslant 2$ if the space has approximate midpoints. The roundness of $L_p$-space is $p$ if $1 \leqslant  p \leqslant  2$.

\subsection{Quadrilateral inequality for CAT(0)-spaces}
The Berg and Nikolaev theorem states
that any geodesic space $(X, d)$ is a CAT(0)-space iff for any quadruple of points
$a, b, c, d \in X$ we have
\begin{equation*}
d(a, d)^2 + d(b, c)^2 \leqslant  d(a, b)^2 + d(b, d)^2 + d(d, c)^2 + d(c, a)^2,
\end{equation*}
see~\cite{BN08}.

\subsection{Reshetnyak's quadruple inequality}

The inequality
\begin{equation}\label{e54}
d(x_1, x_3)^2+d(x_2, x_4)^2 \leqslant  d(x_2, x_3)^2+d(x_4, x_1)^2+2d(x_1, x_2)d(x_3, x_4),
\end{equation}
$x_1, x_2, x_3, x_4 \in X$, characterizes Hadamard spaces  among geodesic metric spaces $(X,d)$, see~\cite{BN08}. For the case of Hadamard spaces,~(\ref{e54}) is known as Reshetnyak's quadruple inequality~\cite{S03} or quadrilateral inequality~\cite{BN08} and can be interpreted as a generalization of the Cauchy--Schwartz inequality to metric spaces~\cite{BN08}. Note that inequality~(\ref{e54}) does not hold in every metric space. In~\cite{S19} a condition, which is called quadruple inequality and generalizes~(\ref{e54}), was  introduced and used to establish convergence rates in probability and expectation for spaces with infinite diameter.

The aim of the following definition is to generalize    as many as possible the above considered inequalities.
\begin{definition}\label{d0}
Consider a semimetric space $(X, d)$. Let  $\Phi,\Psi \colon {\RR}^+\times{\RR}^+ \to {\RR}^+$ be  symmetric and monotone increasing in both of their arguments and $\Psi(0,0)=0$.
We shall say that the pair $(\Phi, \Psi)$ forms a \emph{quadruple inequality} in the space $(X,d)$, if for all $x,y,z,t \in X$, the inequality
\begin{equation}\label{e00}
\Psi\big(d(x,z),d(t,y)\big) \leqslant \Phi\Big(\Psi\big(d(x,y),d(t,z)\big), \Psi\big(d(x,t),d(y,z)\big)\Big)
\end{equation}
holds.
\end{definition}

Clearly, if $\Psi(u,v)=uv$ and $\Phi(u,v)=u+v$, then $(X,d)$ is {Ptolemaic space}.
If $\Psi(u,v)=u+v$ and $\Phi(u,v)=2\delta+\max\{u,v\}$, $\delta\geqslant 0$, then $d$ is {$\delta$-hyperbolic}. If additionally $\delta=0$, then $(X,d)$ is {additive metric space}.
If $\Psi(u,v)=u^q+v^q$ and $\Phi(u,v)=u+v$, then we have inequality~(\ref{e53}), which defines the roundness of the metric space $(X,d)$. In the case $q=2$ we obtain the quadrilateral inequality for $CAT(0)$ spaces. It remains to note that~(\ref{e00}) does not generalize Reshetnyak's quadruple inequality, but, if necessary, such generalizations easily can be made by analogy.

We introduce the following two definitions, which are special cases of Definition~\ref{d0}, in order to simplify formulations of the results obtained in the rest of the paper.

\begin{definition}\label{d1}
Consider a semimetric space $(X, d)$. Let  $\Phi \colon {\RR}^+\times{\RR}^+ \to {\RR}^+$ be  symmetric and monotone increasing in both of its arguments and $\Phi(0,0)=0$.
We shall say that $\Phi$ is a \emph{multiplicative quadruple function} for the space $(X,d)$ if for all $x,y,z,t \in X$ the following \emph{multiplicative quadruple inequality}
\begin{equation}\label{e1}
d(x,z)d(t,y) \leqslant \Phi\big(d(x,y)d(t,z), d(x,t)d(y,z)\big)
\end{equation}
holds.
\end{definition}

\begin{definition}\label{d2}
Consider a semimetric space $(X, d)$. Let  $\Phi \colon {\RR}^+\times{\RR}^+ \to {\RR}^+$ be  symmetric and monotone increasing in both of its arguments and $\Phi(0,0)=0$.
We say that $\Phi$ is a \emph{additive quadruple function} for the space $(X,d)$ if for all $x,y,z,t \in X$, the following \emph{additive quadruple inequality}
\begin{equation}\label{e2}
d(x,z)+d(t,y) \leqslant \Phi\big(d(x,y)+d(t,z), d(x,t)+d(y,z)\big)
\end{equation}
holds.
\end{definition}

The following lemma gives us sufficient conditions imposed on functions $\Phi$ and $\Psi$ under which quadruple inequality~(\ref{e00}) holds for the points $x, y, z, t \in Y$ in the case when at least two points among them coincide.

\begin{lemma}\label{l11}
Let $(Y,\rho)$ be a semimetric space, let $\Phi$ and $\Psi$ satisfy conditions of Definition~\ref{d0} and additionally
\begin{equation}\label{le811}
a \leqslant \Phi(0,a)  \, \text{ for all } \, a>0,
\end{equation}
\begin{equation}\label{le812}
\Psi \text{ is a triangle function in } (Y,\rho),
\end{equation}
\begin{equation}\label{le810}
\Psi(0,a) \leqslant \Phi(0,a) \, \text{ for all } \,  a>0.
\end{equation}
Suppose that among the points $x, y, z, t \in Y$ at least two points coincide. Then inequality~(\ref{e00}) holds.
\end{lemma}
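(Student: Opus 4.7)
My plan is to use the symmetries of inequality~(\ref{e00}) to cut the six possible pair-coincidences down to three representative cases and then verify each separately. Since $\Phi$, $\Psi$ and $\rho$ are all symmetric, one checks directly that~(\ref{e00}) is invariant under each of the two involutions $(x\leftrightarrow z,\ y\leftrightarrow t)$ and $(x\leftrightarrow y,\ z\leftrightarrow t)$. These generate a Klein four-group acting on the six pair-coincidences with orbits $\{x=y,\,z=t\}$, $\{x=z,\,y=t\}$ and $\{x=t,\,y=z\}$, so it suffices to treat the three representatives $x=y$, $x=z$ and $x=t$.

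The cases $x=y$ and $x=t$ are dispatched by monotonicity of $\Phi$ together with condition~(\ref{le811}). Assuming $x=y$, symmetry of $\Psi$ reduces~(\ref{e00}) to
\begin{equation*}
b \leqslant \Phi\bigl(\Psi(0,\rho(t,z)),\,b\bigr), \qquad b:=\Psi(\rho(x,z),\rho(x,t)),
\end{equation*}
and monotonicity of $\Phi$ in its first argument followed by~(\ref{le811}) (or trivially if $b=0$) yields $\Phi(\Psi(0,\rho(t,z)),b)\geqslant \Phi(0,b)\geqslant b$. The case $x=t$ is handled identically, except that the $\Psi(0,\cdot)$-factor appears as the second rather than first argument of $\Phi$, so one discards it by monotonicity in the second argument and closes with~(\ref{le811}) via $\Phi(c,0)=\Phi(0,c)$.

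The case $x=z$ is the substantive one and the only place where the triangle-function hypothesis~(\ref{le812}) and condition~(\ref{le810}) are genuinely invoked. Here the left-hand side of~(\ref{e00}) collapses to $\Psi(0,\rho(t,y))$, while symmetry reduces both arguments of $\Phi$ on the right to $a:=\Psi(\rho(x,y),\rho(x,t))$. The triangle-function property of $\Psi$ yields $\rho(t,y)\leqslant \Psi(\rho(x,t),\rho(x,y))=a$, and then chaining monotonicity of $\Psi$ in the second argument with~(\ref{le810}) and monotonicity of $\Phi$ in the first argument gives
\begin{equation*}
\Psi(0,\rho(t,y))\leqslant \Psi(0,a)\leqslant \Phi(0,a)\leqslant \Phi(a,a),
\end{equation*}
which is~(\ref{e00}) for this case. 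I expect the $x=z$ case to be the main obstacle, as it is the only sub-case that uses~(\ref{le812}) and~(\ref{le810}); degenerate sub-cases with three or four coinciding points are absorbed by the representatives once one notes that $\Psi(0,0)=0$ and that the hypotheses at $0$ reduce to $0\leqslant 0$.
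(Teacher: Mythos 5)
Your proof is correct, and it organizes the case analysis differently from the paper. The paper enumerates seven configurations --- $z=x$ and $y=x$ with the remaining points distinct, the triples $z=t=x$ and $y=t=x$, the double pairs $z=x,\ t=y$ and $y=x,\ z=t$, and $x=y=z=t$ --- and verifies each directly; the remaining single coincidences (such as $x=t$) are dismissed by a ``without loss of generality'' that is never justified, and whose justification in fact needs the symmetry of $\Phi$ (the transposition $y\leftrightarrow t$ swaps the two arguments of $\Phi$ in~(\ref{e00})). You instead prove invariance under an explicit Klein four-group, which needs only the symmetry of $\rho$ and $\Psi$, reduce to the three representatives $x=y$, $x=z$, $x=t$, and --- because your three cases carry no distinctness assumptions on the remaining points --- absorb all degenerate configurations for free rather than listing them. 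The core estimates agree with the paper's: monotonicity of $\Phi$ plus~(\ref{le811}) for $x=y$ and $x=t$, and the chain through~(\ref{le812}), (\ref{le810}) and monotonicity for $x=z$ (the paper runs that chain from the right-hand side of~(\ref{e00}) downward, you from the left-hand side upward --- a cosmetic difference). Your approach buys rigor in the reduction and a shorter, exhaustive list of cases; note also that since your $x=t$ case already invokes the symmetry of $\Phi$ (via $\Phi(c,0)=\Phi(0,c)$), you could have adjoined the transposition $y\leftrightarrow t$ to your group and merged $x=t$ into $x=y$, arriving at just two representative cases --- which is precisely what the paper's implicit ``without loss of generality'' amounts to.
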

\begin{proof}
Without loss of generality consider the following cases:

(i) $z=x$ and  $x$, $y$, $t$ are different. According to~(\ref{e00}) we have to show the inequality
$$
\Psi\big(0,\rho(t,y)\big) \leqslant \Phi\Big(\Psi\big(\rho(x,y),\rho(t,x)\big), \Psi\big(\rho(x,t),\rho(y,x)\big)\Big).
$$
Indeed, using consecutively the monotonicity of $\Phi$, ~(\ref{le812}), ~(\ref{le811}) and ~(\ref{le810}) we have
$$
\Phi\Big(\Psi\big(\rho(x,y),\rho(t,x)\big), \Psi\big(\rho(x,t),\rho(y,x)\big)\Big)\geqslant
\Phi\Big(0, \Psi\big(\rho(x,t),\rho(y,x)\big)\Big)\geqslant
$$
$$
\Phi\Big(0, \rho(t,y)\Big) \geqslant \Psi\big(0,\rho(t,y)\big).
$$

(ii) $y=x$ and  $x$, $t$, $z$ are different.
According to~(\ref{e00}) we have to show the inequality
$$
\Psi\big(\rho(x,z),\rho(t,x)\big) \leqslant \Phi\Big(\Psi\big(0,\rho(t,z)\big), \Psi\big(\rho(x,t),\rho(x,z)\big)\Big),
$$
which, clearly, follows from the monotonicity of $\Phi$,~(\ref{le811}) and symmetry of $\Psi$.

(iii) $z=t=x$ and  $x$, $y$ are different. According to~(\ref{e00}) we have to show the inequality
$$
\Psi\big(0,\rho(x,y)\big) \leqslant \Phi\Big(\Psi\big(\rho(x,y),0\big), \Psi\big(0,\rho(y,x)\big)\Big)
$$
which, clearly, follows from the monotonicity of $\Phi$ and~(\ref{le811}).

(iv) $y=t=x$ and  $x$, $z$ are different.
According to~(\ref{e00}) we have to show the inequality
$$
\Psi\big(\rho(x,z),0\big) \leqslant \Phi\Big(\Psi\big(0,\rho(x,z)\big), \Psi\big(0,\rho(x,z)\big)\Big)
$$
which, clearly, follows from the monotonicity of $\Phi$,~(\ref{le811}) and symmetry of $\Psi$.

(v) $z=x$, $t=y$ and  $x$, $y$ are different.
According to~(\ref{e00}) we have to show the inequality
$$
\Psi\big(0,0\big) \leqslant \Phi\Big(\Psi\big(\rho(x,y),\rho(y,x)\big), \Psi\big(\rho(x,y),\rho(y,x)\big)\Big),
$$
which is evident.

(iv) $y=x$, $z=t$ and  $x$, $z$ are different.
According to~(\ref{e00}) we have to show the inequality
$$
\Psi\big(\rho(x,t),\rho(t,x)\big) \leqslant \Phi\Big(\Psi\big(0,0\big), \Psi\big(\rho(x,t),\rho(x,t)\big)\Big),
$$
which, clearly, follows from~(\ref{le811}).

(v) The case $x=y=z=t$ is evident.
\end{proof}

The following lemma gives us sufficient conditions imposed on function $\Phi$ and semimetric $\rho$ under which multiplicative quadruple inequality~(\ref{e1}) and additive quadruple inequality~(\ref{e2}) hold for the points $x, y, z, t \in Y$ in the case when at least two points among them coincide.

\begin{lemma}\label{l12}
Let $(Y,\rho)$ be a semimetric space and let $\Phi$ satisfy conditions of Definition~\ref{d0}. Suppose that among the points $x, y, z, t \in Y$ at least two points coincide.

(i) If additionally $\Phi$ satisfies condition~(\ref{le811}) then inequality~(\ref{e1}) holds.

(ii) If additionally $\Phi$ satisfies condition~(\ref{le811}) and $(Y,\rho)$ is a metric space then inequality~(\ref{e2}) holds.
\end{lemma}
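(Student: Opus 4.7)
The plan is to follow the case analysis of Lemma~\ref{l11}, treating each configuration of coinciding points separately, but now exploiting the explicit algebraic form of the inner function---product in~(i), sum in~(ii)---rather than invoking a triangle function hypothesis on an auxiliary $\Psi$.

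For part~(i), I would first observe that whenever $x=z$ or $t=y$ the left-hand side of~(\ref{e1}) equals zero, so the inequality is automatic from $\Phi \geqslant 0$. In each of the remaining single-coincidence cases ($x=y$, $x=t$, $y=z$, $z=t$) one of the two arguments of $\Phi$ on the right-hand side collapses to a product containing a zero factor, while the other argument equals, possibly after applying the symmetry of $\Phi$, the product on the left. Condition~(\ref{le811}) then converts $\Phi(0,a) \geqslant a$ into the required bound. The cases with three or four coinciding points reduce to the same pattern or are immediate.

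For part~(ii), I would proceed along the same case list. For the coincidences $x=y$, $x=t$, $y=z$, $z=t$, the sum on the left of~(\ref{e2}) reappears verbatim as one of the arguments of $\Phi$, so monotonicity, symmetry, and~(\ref{le811}) suffice to close those cases. The genuinely new cases are $x=z$ and $y=t$, where the left-hand side reduces to a single distance $\rho(t,y)$ or $\rho(x,z)$ that does not itself appear as an argument of $\Phi$. The key step is to invoke the triangle inequality for $\rho$ to bound, say, $\rho(t,y) \leqslant \rho(x,t) + \rho(y,x)$, and then to apply~(\ref{le811}) together with monotonicity to the corresponding $\Phi$-term. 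This is precisely the point at which the hypothesis that $(Y,\rho)$ is a metric space (and not merely a semimetric one) is used.

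The main obstacle, modest as it is, is confined to these last two cases of~(ii): without the triangle inequality there is no way to recover the isolated distance on the left-hand side of~(\ref{e2}) from the symmetric sums appearing on the right. Everything else is a tidy bookkeeping of cases, in the same spirit as the proof of Lemma~\ref{l11}.
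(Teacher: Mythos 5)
Your proof is correct and is essentially the paper's argument written out in full: the paper simply sets $\Psi(u,v)=uv$ (part (i)) and $\Psi(u,v)=u+v$ (part (ii)) in Lemma~\ref{l11}, and your case-by-case verification is exactly what that specialization unfolds to. In particular, your two key observations coincide with the paper's: in the multiplicative case the left-hand side vanishes when $x=z$ or $t=y$, which is precisely the paper's remark that case (i) of Lemma~\ref{l11} can be handled without conditions~(\ref{le812}) and~(\ref{le810}); and in the additive case the metric hypothesis enters exactly because $\Psi(u,v)=u+v$ is a triangle function for $(Y,\rho)$ if and only if $\rho$ satisfies the triangle inequality.
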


\begin{proof}
(i) It suffices to set $\Psi(u,v)=uv$ in~Lemma~\ref{l11}  and to note that the case (i) of Lemma~\ref{l11} can be easily shown without conditions~(\ref{le812}) and~(\ref{le810}).

(ii) It suffices to set $\Psi(u,v)=u+v$ in~Lemma~\ref{l11}.
\end{proof}

\section{Quasisymmetric mappings preserving \\ four-point inequalities}

\subsection{Quasisymmetric mappings preserving the generalized four-point inequality}

Quasisymmetric mappings on the real line were first introduced by Beurling and Ahlfors~\cite{BA}.
They found a way to obtain a quasiconformal extension of a quasisymmetric self-mapping of the real axis to a self-mapping of the upper half-plane.
This concept was later generalized by Tukia and V\"{a}is\"{a}l\"{a}~\cite{TV80}, who studied quasisymmetric mappings between general metric spaces.
In the recent years, these mappings are being intensively studied by many mathematicians, see e.g.,~\cite{AKT05, T98, HL15, BM13, YVZ18}
In~\cite{PS22} the definition of quasisymmetric mappings was extended  to the case of general semimetric spaces in the following way.
\begin{definition}\label{d11}
Let $(X,d)$, $(Y,\rho)$ be semimetric spaces. We shall say that a mapping $f\colon X\to Y$ is $\eta$-\emph{quasisymmetric} if there is a homeomorphism $\eta\colon [0, \infty)\to [0,\infty)$ so that
\begin{equation}\label{e0}
d(x,a)\leqslant t d(x,b)  \, \text{ implies } \, \rho(f(x),f(a))\leqslant \eta(t)\rho(f(x),f(b))
\end{equation}
for all triples $a,b,x$ of points in $X$ and for all $t>0$.
\end{definition}

Let $(X,d)$ be a semimetric space. Denote by
$$B(a,r)=\{x\in X\colon d(a,x)<r\}$$
a ball in the space $(X,d)$ with the center $a$ and the radius $r$. Let for any pair of points $a, b \in X$ and any $\varepsilon>0$ there exists $\delta>0$ such that for all $x\in B(a,\delta)$ and $y\in B(b,\delta)$ the inequality $|d(x,y)-d(a,b)|<\varepsilon$ holds. In this case the semimetric $d$ is called continuous. In particular, any metric $d$ has this property. The topology generated by a continuous semimetric is always a Hausdorff topology. The system of sets of the form $B(a,r)$ gives the neighborhood base at the point $a$ in this topology. Note that under a supposition in Definition~\ref{d11} that $d$ and $\rho$ are continuous it follows  that $f$ is either a constant or an embedding, i.e., injective continuous mapping between semimetric spaces with continuous inverse mapping~\cite[Proposition 2.7]{PS22}.

The following theorem gives us the set of necessary conditions under which an $\eta$-{quasisymmetric} mapping $f\colon X\to Y$ preserves a quadruple inequality.

\begin{theorem}\label{lp3.11}
Let $(X,d)$, $(Y,\rho)$ be semimetric spaces. Suppose that the pair $(\Phi,\Psi)$ forms a quadruple inequality in $(X,d)$ and $\Phi$, $\Psi$ 
satisfy conditions~(\ref{le811}), (\ref{le812}), (\ref{le810}) and additionally

\begin{equation}\label{le211}
k\Phi(u,v)= \Phi(ku,kv) \, \text{ for all } \, k>0,
\end{equation}
\begin{equation}\label{le212}
\Psi\left(\frac{a}{b},\frac{c}{d}\right)= \frac{\Psi(a,b)}{\Psi(c,d)} \quad \text{for all } \, a,b,c,d>0.
\end{equation}
Let $f\colon X\to Y$ be a bijective $\eta$-quasisymmetric mapping. If for every $t_1, t_2, t_3, t_4\in \RR^+\setminus\{0\}$ the inequality
\begin{multline}\label{le271}
1\leqslant \Phi\left(\Psi\left(\frac{1}{t_1},\frac{1}{t_2}\right),\Psi\left(\frac{1}{t_3},\frac{1}{t_4}\right)\right)
\, \text{ implies } \, \\
1\leqslant \Phi\left(\Psi\left(\frac{1}{\eta(t_1)},\frac{1}{\eta(t_2)}\right),\Psi\left(\frac{1}{\eta(t_3)},\frac{1}{\eta(t_4)}\right)\right),
\end{multline}
then $(\Phi,\Psi)$ also forms a quadruple inequality in $(Y,\rho)$.
\end{theorem}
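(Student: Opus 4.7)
Since $f$ is a bijection between $X$ and $Y$, verifying the quadruple inequality in $(Y,\rho)$ is equivalent to verifying it at all quadruples of the form $f(x),f(y),f(z),f(t)$ for $x,y,z,t\in X$. If two of $x,y,z,t$ coincide then the same is true of their images (by injectivity of $f$), and Lemma~\ref{l11}, applied in $(Y,\rho)$ under the hypotheses (\ref{le811}), (\ref{le812}), (\ref{le810}), disposes of this case. So I may assume $x,y,z,t$ are pairwise distinct; consequently all six pairwise distances among them in $X$, and among $f(x),f(y),f(z),f(t)$ in $Y$, are strictly positive.

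The main step is a transport of the quadruple inequality from $(X,d)$ to $(Y,\rho)$. Starting from
$$\Psi(d(x,z),d(t,y))\leqslant\Phi\bigl(\Psi(d(x,y),d(t,z)),\Psi(d(x,t),d(y,z))\bigr),$$
I would divide both sides by $\Psi(d(x,z),d(t,y))>0$, pull the factor inside $\Phi$ via (\ref{le211}), and collapse the two resulting $\Psi$-quotients via (\ref{le212}). This rewrites the inequality in the form
$$1\leqslant\Phi\Bigl(\Psi\bigl(1/t_1,1/t_2\bigr),\Psi\bigl(1/t_3,1/t_4\bigr)\Bigr),$$
with
$$t_1=\frac{d(x,z)}{d(x,y)},\quad t_2=\frac{d(t,y)}{d(t,z)},\quad t_3=\frac{d(x,z)}{d(x,t)},\quad t_4=\frac{d(t,y)}{d(y,z)}.$$
Hypothesis (\ref{le271}) then promotes this to the same inequality with each $t_i$ replaced by $\eta(t_i)$.

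Finally I would apply $\eta$-quasisymmetry four times, with basepoints $x,t,x,y$ respectively, to the equalities $d(x,z)=t_1 d(x,y)$, $d(t,y)=t_2 d(t,z)$, $d(x,z)=t_3 d(x,t)$, $d(y,t)=t_4 d(y,z)$, obtaining
$$\frac{1}{\eta(t_1)}\leqslant\frac{\rho(f(x),f(y))}{\rho(f(x),f(z))},\quad \frac{1}{\eta(t_2)}\leqslant\frac{\rho(f(t),f(z))}{\rho(f(t),f(y))},$$
$$\frac{1}{\eta(t_3)}\leqslant\frac{\rho(f(x),f(t))}{\rho(f(x),f(z))},\quad \frac{1}{\eta(t_4)}\leqslant\frac{\rho(f(y),f(z))}{\rho(f(t),f(y))}.$$
Substituting these lower bounds into the $\eta$-promoted inequality via the monotonicity of $\Psi$ and $\Phi$, and then running (\ref{le212}) and (\ref{le211}) in reverse (i.e.\ consolidating the four $\rho$-ratios into two $\Psi$-quotients and absorbing the denominator $\Psi(\rho(f(x),f(z)),\rho(f(t),f(y)))$ back out of $\Phi$), yields the quadruple inequality for $(\Phi,\Psi)$ in $(Y,\rho)$ at the quadruple $f(x),f(y),f(z),f(t)$, as required.

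The conceptual content of the argument is already encoded in hypothesis (\ref{le271}); the main obstacle is essentially bookkeeping, namely choosing the four ratios $t_i$ so that the inequality in $X$ takes the precise shape required by (\ref{le271}), and pairing each $t_i$ with the correct basepoint for the quasisymmetry step. Once that matching is fixed, the homogeneity conditions (\ref{le211}) and (\ref{le212}) do all of the algebraic work, both going forward (to produce the $t_i$) and coming back (to reassemble the target inequality in $(Y,\rho)$).
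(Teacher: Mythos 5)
Your proposal is correct and follows essentially the same route as the paper's own proof: the same normalization of the quadruple inequality in $(X,d)$ via (\ref{le211}) and (\ref{le212}), the same choice of ratios $t_1,\dots,t_4$, the same four applications of $\eta$-quasisymmetry with basepoints $x,t,x,y$, and the same appeal to Lemma~\ref{l11} for image quadruples with coinciding points. No gaps.
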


\begin{proof}
Let $x, y, z, t \in X$ be different points and let $x'=f(x)$, $y'=f(y)$, $z'=f(z)$, $t'=f(t)$.  By~(\ref{e00}), ~(\ref{le211}) and~(\ref{le212})  we have
\begin{equation*}
1\leqslant \Phi \left(\Psi\left(\frac{d(x,y)}{d(x,z)},\frac{d(t,z)}{d(t,y)}\right), \Psi\left(\frac{d(x,t)}{d(x,z)},\frac{d(y,z)}{d(t,y)}\right)\right).
\end{equation*}
Set
$$
\frac{d(x,z)}{d(x,y)}= t_1, \quad \frac{d(t,y)}{d(t,z)}= t_2, \quad \frac{d(x,z)}{d(x,t)}= t_3, \quad \frac{d(t,y)}{d(y,z)}= t_4.
$$
Hence,
\begin{equation}\label{le291}
1\leqslant \Phi\left(\Psi\left(\frac{1}{t_1},\frac{1}{t_2}\right),\Psi\left(\frac{1}{t_3},\frac{1}{t_4}\right)\right).
\end{equation}

By~(\ref{e0}) we have
$$
\rho(f(x),f(z))\leqslant \eta(t_1) \rho(f(x),f(y)), \quad
\rho(f(t),f(y))\leqslant \eta(t_2) \rho(f(t),f(z)),
$$
$$
\rho(f(x),f(z))\leqslant \eta(t_3) \rho(f(x),f(t)), \quad
\rho(f(t),f(y))\leqslant \eta(t_4) \rho(f(y),f(z)).
$$
Hence, since $\Psi$ is monotone increasing in both of its arguments we have
$$
\Psi\left(\frac{1}{\eta(t_1)},\frac{1}{\eta(t_2)}\right)
\leqslant \Psi\left(\frac{\rho(x',y')}{\rho(x',z')},\frac{\rho(t',z')}{\rho(t',y')}\right)
$$
and
$$
\Psi\left(\frac{1}{\eta(t_3)},\frac{1}{\eta(t_4)}\right)
\leqslant
\Psi\left(\frac{\rho(x',t')}{\rho(x',z')},\frac{\rho(y',z')}{\rho(t',y')}\right).
$$
Since $\Phi$ is monotone increasing we have
$$
\Phi\left(\Psi\left(\frac{1}{\eta(t_1)},\frac{1}{\eta(t_2)}\right),\Psi\left(\frac{1}{\eta(t_3)},\frac{1}{\eta(t_4)}\right)\right) $$
$$
\leqslant
\Phi\left(\Psi\left(\frac{\rho(x',y')}{\rho(x',z')},\frac{\rho(t',z')}{\rho(t',y')}\right), \Psi\left(\frac{\rho(x',t')}{\rho(x',z')},\frac{\rho(y',z')}{\rho(t',y')}\right)\right).
$$
Using consecutively condition~(\ref{le291}), implication~(\ref{le271})
and the previous inequality we get
$$
1\leqslant
\Phi\left(\Psi\left(\frac{\rho(x',y')}{\rho(x',z')},\frac{\rho(t',z')}{\rho(t',y')}\right), \Psi\left(\frac{\rho(x',t')}{\rho(x',z')},\frac{\rho(y',z')}{\rho(t',y')}\right)\right).
$$
By~(\ref{le212}) we have
$$
1\leqslant
\Phi\left(
\frac{\Psi(\rho(x',y'),\rho(t',z'))}{\Psi(\rho(x',z'),\rho(t',y'))}
, \frac{\Psi(\rho(x',t'),\rho(y',z'))}{\Psi(\rho(x',z'),\rho(t',y'))}
\right).
$$
Using~(\ref{le211}) we obtain
$$
1\leqslant
\frac{\Phi\left(
\Psi(\rho(x',y'),\rho(t',z'))
, \Psi(\rho(x',t'),\rho(y',z'))
\right)}{\Psi(\rho(x',z'),\rho(t',y'))}.
$$
Hence, we have inequality~(\ref{e00}) for  $x', y', z', t'$.

The case when among the points $x', y', z', t' \in X$ at least two points coincide is proved in Lemma~\ref{l11},  which completes the proof.
\end{proof}

\begin{remark}
Note that the general solution of the equation $k\Phi(x,y)=\Phi(kx,ky)$, $k\neq 0$, has the form
$\Phi(x,y)=xg(y/x)$, where $g(x)$ is an arbitrary function, see~\cite[p. 429]{PM98}.
\end{remark}

\begin{corollary}\label{p3.11}
Let $(X,d)$, $(Y,\rho)$ be semimetric spaces, $\Phi$ be a multiplicative quadruple function for $(X,d)$ such that condition~(\ref{le811}) holds and
\begin{equation*}
k\Phi(u,v)= \Phi(ku,kv) \, \text{ for all } \,  k>0,
\end{equation*}
and let $f\colon X\to Y$ be a surjective $\eta$-quasisymmetric mapping. If for every $t_1, t_2, t_3, t_4\in \RR^+$ the inequality
\begin{multline*}
t_1t_2t_3t_4 \leqslant \Phi(t_1t_2,t_3t_4)
\, \text{ implies } \, \\
\eta(t_1)\eta(t_2)\eta(t_3)\eta(t_4)\leqslant \Phi(\eta(t_1)\eta(t_2),\eta(t_3)\eta(t_4)),
\end{multline*}
then $\Phi$ is also a multiplicative quadruple function for $(Y,\rho)$.
\end{corollary}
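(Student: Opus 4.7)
The plan is to reduce the corollary to Theorem~\ref{lp3.11} by specialising $\Psi(u,v)=uv$, while working around the fact that this particular $\Psi$ is not a triangle function on a generic semimetric space, so hypotheses~(\ref{le812}) and (\ref{le810}) fail. The key observation is that in the proof of Theorem~\ref{lp3.11} those two conditions enter only to dispatch the case when two or more of the four points coincide, via Lemma~\ref{l11}; in the multiplicative setting Lemma~\ref{l12}(i) gives the same conclusion under condition~(\ref{le811}) alone, so the two hypotheses can simply be dropped.

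First I would verify that $\Psi(u,v)=uv$ meets the conditions from Definition~\ref{d0}: symmetry, monotonicity in each argument, and $\Psi(0,0)=0$. I would also check the ``homomorphism'' identity actually used in the proof of Theorem~\ref{lp3.11}, namely $\Psi(a/b,c/d)=\Psi(a,c)/\Psi(b,d)$, which for the product is the tautology $ac/bd=ac/bd$. Next I would rewrite the implication~(\ref{le271}) in the form stated in the corollary: multiplying both sides of $1\leqslant \Phi(1/(t_1t_2),1/(t_3t_4))$ by $t_1t_2t_3t_4$ and applying~(\ref{le211}) together with the symmetry of $\Phi$ gives $t_1t_2t_3t_4\leqslant \Phi(t_1t_2,t_3t_4)$, and the same manipulation turns the conclusion into the $\eta$-form asserted in the corollary, so the two implications are equivalent.

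With these translations in place, the main argument of Theorem~\ref{lp3.11} applies verbatim to any pairwise distinct $x,y,z,t\in X$: setting $t_1=d(x,z)/d(x,y)$, $t_2=d(t,y)/d(t,z)$, $t_3=d(x,z)/d(x,t)$, $t_4=d(t,y)/d(y,z)$, the multiplicative quadruple inequality at these four points, the quasisymmetry bounds $\rho(f(x),f(z))\leqslant \eta(t_1)\rho(f(x),f(y))$ and the three analogues, the monotonicity of $\Phi$, and finally the dilation property~(\ref{le211}) yield the target inequality $\rho(x',z')\rho(t',y')\leqslant \Phi(\rho(x',y')\rho(t',z'),\rho(x',t')\rho(y',z'))$ for the images. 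For the coincidence case I would invoke Lemma~\ref{l12}(i) directly in $(Y,\rho)$, which needs only~(\ref{le811}). The weakening from ``bijective'' in the theorem to ``surjective'' in the corollary costs nothing: given four distinct images in $Y$ one picks preimages via surjectivity, and they are automatically distinct in $X$ because $f$ is a function.

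The main obstacle is conceptual rather than computational: recognising that Theorem~\ref{lp3.11} cannot be invoked as a black box, since $\Psi(u,v)=uv$ does not satisfy (\ref{le812}) or (\ref{le810}) in general, and locating the exact place in its proof where those hypotheses are used so that Lemma~\ref{l12}(i) can be swapped in for Lemma~\ref{l11}. After that, the argument is essentially bookkeeping: tracking which reciprocals $1/t_i$ or $1/\eta(t_i)$ appear at which slot of $\Phi$ and verifying that the symmetry of $\Phi$ lines the two ensuing inequalities up with the form of the implication given in the hypothesis of the corollary.
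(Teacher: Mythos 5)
Your proposal is correct and is essentially the paper's own proof: the paper likewise handles this corollary by setting $\Psi(u,v)=uv$ in Theorem~\ref{lp3.11} and invoking Lemma~\ref{l12}(i) for the case of coinciding points. Your additional care---observing that $\Psi(u,v)=uv$ is not a triangle function, so condition~(\ref{le812}) fails and the theorem must be reused through its proof (with Lemma~\ref{l12}(i) replacing Lemma~\ref{l11}) rather than cited as a black box, and that the identity actually needed is $\Psi(a/b,c/d)=\Psi(a,c)/\Psi(b,d)$ rather than the literal statement of~(\ref{le212})---is exactly the justification that the paper's one-line proof leaves implicit.
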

\begin{proof}
It suffices to set $\Psi(u,v)=uv$ in Theorem~\ref{lp3.11} and to apply Lemma~\ref{l12} (i).
\end{proof}

The following corollary was initially obtained in~\cite{PS22} by the direct proof.

\begin{corollary}\label{c3.11}
Let $(X,d)$ be a Ptolemaic space, $(Y,\rho)$ be a semimetric space and let $f\colon X\to Y$ be a bijective $\eta$-quasisymmetric mapping. If for every $t_1, t_2, t_3, t_4\in \RR^+$ the inequality
\begin{multline}\label{e2.8}
t_1t_2t_3t_4 \leqslant t_1t_2+t_3t_4
\, \text{ implies } \, \\
\eta(t_1)\eta(t_2)\eta(t_3)\eta(t_4)\leqslant \eta(t_1)\eta(t_2)+\eta(t_3)\eta(t_4),
\end{multline}
then $(Y,\rho)$ is also Ptolemaic.
\end{corollary}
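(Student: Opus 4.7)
The plan is to specialize Corollary~\ref{p3.11} to the case $\Phi(u,v) = u+v$. Comparing the Ptolemy inequality~(\ref{e270}) with the multiplicative quadruple inequality~(\ref{e1}), a semimetric space is Ptolemaic if and only if the function $\Phi(u,v)=u+v$ serves as a multiplicative quadruple function for it. Hence proving $(Y,\rho)$ is Ptolemaic amounts to transferring this particular $\Phi$ across $f$.

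First I would verify that $\Phi(u,v)=u+v$ satisfies the structural hypotheses of Corollary~\ref{p3.11}: it is symmetric, monotone increasing in both variables, $\Phi(0,0)=0$, condition~(\ref{le811}) reduces to $a \leqslant 0+a$, and the homogeneity $k\Phi(u,v)=k(u+v)=\Phi(ku,kv)$ follows by distributivity. All are immediate.

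Next I would check that, under the substitution $\Phi(u,v)=u+v$, the implication hypothesis of Corollary~\ref{p3.11} becomes literally the implication~(\ref{e2.8}) of the present statement, since $\Phi(t_1t_2,t_3t_4)=t_1t_2+t_3t_4$ and likewise for the $\eta$-side. Since $f$ is bijective it is in particular surjective, so every hypothesis of Corollary~\ref{p3.11} is satisfied.

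Applying Corollary~\ref{p3.11} then yields that $\Phi(u,v)=u+v$ is a multiplicative quadruple function for $(Y,\rho)$, which by Definition~\ref{d1} is exactly the Ptolemaic property of $(Y,\rho)$. I do not anticipate any real obstacle: the technical content has already been absorbed into the more general Theorem~\ref{lp3.11} and Corollary~\ref{p3.11}, and this corollary is a direct specialization; the only point requiring care is the identification of the Ptolemy axiom with the $(\Phi,\Psi)$ framework for the choice $\Psi(u,v)=uv$, $\Phi(u,v)=u+v$.
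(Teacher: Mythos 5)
Your proposal is correct and follows exactly the paper's own route: the paper proves this corollary by setting $\Phi(u,v)=u+v$ in Corollary~\ref{p3.11}, which is precisely your specialization. Your additional verifications (homogeneity, condition~(\ref{le811}), the identification of the Ptolemy inequality with the multiplicative quadruple inequality for $\Psi(u,v)=uv$, and surjectivity following from bijectivity) are all sound and merely make explicit what the paper leaves implicit.
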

\begin{proof}
It suffices to set $\Phi(u,v)=u+v$ in Corollary~\ref{p3.11}.
\end{proof}

The following assertion is well-known, see, e.g., Section 2.12 in~\cite{HLP52}.
\begin{lemma}\label{l3.1}
If $0<\alpha\leqslant 1$, then for $u,v\geqslant 0$ the inequality
$(u+v)^{\alpha}\leqslant u^{\alpha}+v^{\alpha}$
holds.
\end{lemma}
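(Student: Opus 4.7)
The plan is to reduce the inequality to a one-variable statement via homogeneity and then exploit the elementary bound $x^{\alpha}\geqslant x$ for $x\in[0,1]$ when $0<\alpha\leqslant 1$. First I would dispose of the trivial cases: if $u=0$ or $v=0$ both sides are equal, so I may assume $u+v>0$. Observing that the inequality is homogeneous of degree $\alpha$ in the pair $(u,v)$, I would divide through by $(u+v)^{\alpha}$, setting $s=u/(u+v)$ and $1-s=v/(u+v)$ with $s\in[0,1]$. The claim then reduces to showing $s^{\alpha}+(1-s)^{\alpha}\geqslant 1$ for $s\in[0,1]$.

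Next I would establish the auxiliary fact that for any $x\in[0,1]$ and $0<\alpha\leqslant 1$ one has $x^{\alpha}\geqslant x$. This follows because $x^{\alpha}=x\cdot x^{\alpha-1}$ with $\alpha-1\leqslant 0$ and $x\leqslant 1$, so $x^{\alpha-1}\geqslant 1$. Applying this to both $s$ and $1-s$ yields $s^{\alpha}+(1-s)^{\alpha}\geqslant s+(1-s)=1$, which is exactly the normalized form of the desired inequality. Multiplying back by $(u+v)^{\alpha}$ returns $(u+v)^{\alpha}\leqslant u^{\alpha}+v^{\alpha}$.

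As a backup, should the one-variable reduction feel unmotivated in the final write-up, one could instead fix $v\geqslant 0$ and consider $g(u)=u^{\alpha}+v^{\alpha}-(u+v)^{\alpha}$ on $[0,\infty)$. Then $g(0)=0$ and $g'(u)=\alpha\bigl(u^{\alpha-1}-(u+v)^{\alpha-1}\bigr)\geqslant 0$, since $u\leqslant u+v$ and $x\mapsto x^{\alpha-1}$ is nonincreasing for $0<\alpha\leqslant 1$; hence $g(u)\geqslant 0$ for all $u\geqslant 0$. There is no real obstacle here because the statement is classical (and the paper already cites \cite{HLP52}); the only point requiring minor care is the handling of the boundary value $u=0$ where $u^{\alpha-1}$ need not be finite, which is why the homogeneity reduction is cleaner and would be my preferred presentation.
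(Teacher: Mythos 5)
Your proof is correct, but it differs from the paper in an important structural sense: the paper gives no proof of this lemma at all, simply labelling it ``well-known'' and citing Section~2.12 of Hardy--Littlewood--P\'olya~\cite{HLP52}. What you supply is the standard self-contained argument for subadditivity of $t\mapsto t^{\alpha}$: normalize by homogeneity to reduce to $s^{\alpha}+(1-s)^{\alpha}\geqslant 1$ on $[0,1]$, then use $x^{\alpha}\geqslant x$ for $x\in[0,1]$. Both steps are sound; the only point worth polishing is that the justification $x^{\alpha}=x\cdot x^{\alpha-1}$ presumes $x>0$, so you should note separately that $x^{\alpha}\geqslant x$ holds trivially at $x=0$ (which also covers the endpoint cases $s\in\{0,1\}$). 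Your backup via the derivative of $g(u)=u^{\alpha}+v^{\alpha}-(u+v)^{\alpha}$ is likewise valid, and you correctly identify its weak spot (the possibly infinite derivative at $u=0$), which is precisely why the homogeneity route is preferable. In short: the paper's citation buys brevity and defers to a classical reference, while your argument makes the lemma self-contained at the cost of a few lines; either is acceptable, and your version would be a legitimate replacement for the citation.
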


\begin{example}\label{e214}
Let $X$ be a Ptolemaic space, $Y$ be a semimetric space and let $f\colon X\to Y$ be a bijective $\eta$-quasisymmetric mapping such that $\eta(t)=t^{\alpha}$, $0<\alpha\leqslant 1$. Then $Y$ is also a Ptolemaic space. Indeed, monotonicity of $\eta$, the first inequality in~(\ref{e2.8}) and Lemma~\ref{l3.1} imply
\begin{equation*}
t_1^{\alpha}t_2^{\alpha}t_3^{\alpha}t_4^{\alpha} \leqslant (t_1t_2+t_3t_4)^{\alpha}\leqslant t_1^{\alpha}t_2^{\alpha}+t_3^{\alpha}t_4^{\alpha}
\end{equation*}
which by Corollary~\ref{c3.11} completes the proof.
\end{example}

\subsection{Quasisymmetric mappings preserving additive four-point inequality}

The following theorem gives us the set of necessary conditions under which an $\eta$-{quasisymmetric} mapping $f\colon X\to Y$ preserves an additive quadruple inequality.

\begin{theorem}\label{t4.5}
Let $(X,d)$ be a semimetric space, $(Y,\rho)$ be a metric space and let $\Phi$ be an additive quadruple function for $(X,d)$ satisfying~(\ref{le811}) and such that
\begin{equation}\label{le511}
k\Phi(u,v)= \Phi(ku,kv) \,  \text{ for all } \, k>0,
\end{equation}
and let $f\colon X\to Y$ be a bijective $\eta$-quasisymmetric  mapping. If for every $t_1, t_2, t_3, t_4, t_5\in \RR^+\setminus\{0\}$ the inequality
\begin{equation}\label{e51}
1+\frac{1}{t_1}\frac{1}{t_5}\leqslant \Phi\left(\frac{1}{t_1}+\frac{1}{t_2}, \frac{1}{t_3}+\frac{1}{t_4}\right)
\end{equation}
 implies
\begin{equation}\label{e52}
1+\eta\left(\frac{1}{t_1}\right)\eta\left(\frac{1}{t_5}\right)\leqslant \Phi\left(\frac{1}{\eta(t_1)}+\frac{1}{\eta(t_2)}, \frac{1}{\eta(t_3)}+\frac{1}{\eta(t_4)}\right),
\end{equation}
then $\Phi$ is also an additive quadruple function for $(Y,\rho)$.
\end{theorem}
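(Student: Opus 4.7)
The plan mirrors the proof of Theorem~\ref{lp3.11}. Starting from~(\ref{e2}) applied to four distinct points $x,y,z,t\in X$, I normalize by $d(x,z)$ using the homogeneity~(\ref{le511}), reindex via rational parameters, use the hypothesis~(\ref{e51})$\Rightarrow$(\ref{e52}) to transfer the inequality into $(Y,\rho)$, and then exploit $\eta$-quasisymmetry together with the monotonicity of $\Phi$ to recover~(\ref{e2}) for the images $x'=f(x),y'=f(y),z'=f(z),t'=f(t)$.

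The first step is to set
\begin{equation*}
t_1=\frac{d(x,z)}{d(x,y)},\ t_2=\frac{d(x,z)}{d(z,t)},\ t_3=\frac{d(x,z)}{d(x,t)},\ t_4=\frac{d(x,z)}{d(z,y)},\ t_5=\frac{d(x,y)}{d(y,t)}.
\end{equation*}
A direct computation gives $\frac{1}{t_1 t_5}=\frac{d(t,y)}{d(x,z)}$, while $\frac{1}{t_i}$ for $i=1,2,3,4$ are the four summands on the right of~(\ref{e2}) divided by $d(x,z)$. Using~(\ref{le511}) to pull $1/d(x,z)$ inside $\Phi$ turns~(\ref{e2}) into exactly~(\ref{e51}), so the hypothesis produces~(\ref{e52}).

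The main obstacle, and the reason an extra parameter $t_5$ appears here as compared to the four ratios used in Theorem~\ref{lp3.11}, is that the term $d(t,y)$ shares no endpoint with $d(x,z)$, so no single application of~(\ref{e0}) controls $\rho(t',y')/\rho(x',z')$. The parameter $t_5$ bridges this gap through $y$: two successive applications of~(\ref{e0}), at center $y$ with ratio $1/t_5$ and at center $x$ with ratio $1/t_1$, yield $\rho(t',y')\leq \eta(1/t_5)\rho(x',y')\leq \eta(1/t_1)\eta(1/t_5)\rho(x',z')$, which matches the extra term on the left-hand side of~(\ref{e52}). On the right-hand side, four applications of~(\ref{e0}) at centers $x,z,x,z$ with ratios $t_1,t_2,t_3,t_4$ supply the lower bounds $\rho(x',y'),\rho(t',z'),\rho(x',t'),\rho(y',z')\geq \rho(x',z')/\eta(t_i)$. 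These majorize the corresponding arguments of $\Phi$, so by monotonicity of $\Phi$ and a final application of~(\ref{le511}) to clear the factor $\rho(x',z')$ from the denominators, one recovers exactly~(\ref{e2}) for $x',y',z',t'$.

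The degenerate case, where at least two of the four points coincide, is handled by Lemma~\ref{l12}(ii); this is precisely where the assumption that $(Y,\rho)$ is a metric space (rather than merely semimetric) enters, with bijectivity of $f$ ensuring that coincidences transfer between the domain and the image.
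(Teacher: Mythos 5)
Your proposal is correct and follows essentially the same route as the paper's own proof: the identical substitutions $t_1,\dots,t_5$, normalization by $d(x,z)$ via the homogeneity~(\ref{le511}), the same six applications of~(\ref{e0}) (including the two chained through $y$ to control $\rho(t',y')$ by $\eta(1/t_1)\eta(1/t_5)\rho(x',z')$), monotonicity of $\Phi$ together with the hypothesis~(\ref{e51})$\Rightarrow$(\ref{e52}), and Lemma~\ref{l12}(ii) for the degenerate case. Your observation about why the extra parameter $t_5$ is needed, and where the metric (rather than semimetric) assumption on $(Y,\rho)$ enters, matches the paper's reasoning exactly.
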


\begin{proof}
Let $x, y, z, t \in X$ be different points and let $x'=f(x)$, $y'=f(y)$, $z'=f(z)$, $t'=f(t)$.  Dividing both parts of~(\ref{e2}) on $d(x,z)$,
using~(\ref{le511}) and the evident equality
$$
\frac{d(t,y)}{d(x,z)}=
\frac{d(x,y)}{d(x,z)}
\frac{d(t,y)}{d(x,y)}
$$
we have
\begin{equation}\label{e281}
1 \leqslant \Phi\left( \frac{d(x,y)}{d(x,z)} + \frac{d(t,z)}{d(x,z)}, \frac{d(x,t)}{d(x,z)} + \frac{d(y,z)}{d(x,z)}\right) -
\frac{d(x,y)}{d(x,z)}
\frac{d(t,y)}{d(x,y)}.
\end{equation}
Set
$$
\frac{d(x,z)}{d(x,y)}= t_1, \quad \frac{d(x,z)}{d(t,z)}= t_2, \quad
\frac{d(x,z)}{d(x,t)}= t_3, \quad \frac{d(x,z)}{d(y,z)}= t_4, \quad \frac{d(x,y)}{d(t,y)}= t_5.
$$
Clearly, ~(\ref{e281}) implies~(\ref{e51}).

By~(\ref{e0}) we have
$$
\rho(f(x),f(z))\leqslant \eta(t_1) \rho(f(x),f(y)), \quad
\rho(f(x),f(z))\leqslant \eta(t_2) \rho(f(t),f(z)),
$$

$$
\rho(f(x),f(z))\leqslant \eta(t_3) \rho(f(x),f(t)), \quad
\rho(f(x),f(z))\leqslant \eta(t_4) \rho(f(y),f(z)),
$$

$$
\rho(f(x),f(y))\leqslant \eta\left(\frac{1}{t_1}\right) \rho(f(x),f(z)).
$$
$$
\rho(f(t),f(y))\leqslant \eta\left(\frac{1}{t_5}\right) \rho(f(x),f(y))
$$

Hence, by the monotonicity of $\Phi$ we obtain
$$
\Phi\left(\frac{1}{\eta(t_1)} + \frac{1}{\eta(t_2)}, \frac{1}{\eta(t_3)} + \frac{1}{\eta(t_4)}\right) - \eta\left(\frac{1}{t_1}\right)\eta\left(\frac{1}{t_5}\right)
$$
$$
\leqslant \Phi\left( \frac{\rho(x',y')}{\rho(x',z')} + \frac{\rho(t',z')}{\rho(x',z')}, \frac{\rho(x',t')}{\rho(x',z')} + \frac{\rho(y',z')}{\rho(x',z')}\right) -
\frac{\rho(x',y')}{\rho(x',z')}
\frac{\rho(t',y')}{\rho(x',y')}.
$$
Using the last inequality,~(\ref{e52}) and~(\ref{le511}) we obtain additive quadruple inequality~(\ref{e2}) in the space $(Y,\rho)$ for the points $x', y', z', t'$.

The case when among the points $x', y', z', t'$ at least two points coincide follows from Lemma~\ref{l12} (ii), which completes the proof.
\end{proof}

The following corollary was initially obtained in~\cite[Theorem 4.1]{PS21B} by the direct proof.

\begin{corollary}\label{t4.1}
Let $(X,d)$ be an additive metric space, $(Y,\rho)$ be a metric space and let $f\colon X\to Y$ be a bijective $\eta$-quasisymmetric mapping. If for every $t_1, t_2, t_3, t_4, t_5\in \RR^+\setminus\{0\}$ the inequality
\begin{equation*}
1+\frac{1}{t_1}\frac{1}{t_5}\leqslant \max\left\{\frac{1}{t_1}+\frac{1}{t_2}, \frac{1}{t_3}+\frac{1}{t_4}\right\}
\end{equation*}
 implies
\begin{equation*}
1+\eta\left(\frac{1}{t_1}\right)\eta\left(\frac{1}{t_5}\right)\leqslant \max\left\{\frac{1}{\eta(t_1)}+\frac{1}{\eta(t_2)}, \frac{1}{\eta(t_3)}+\frac{1}{\eta(t_4)}\right\},
\end{equation*}
then $(Y,\rho)$ is also an additive metric space.
\end{corollary}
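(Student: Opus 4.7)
The plan is to derive this corollary as a direct specialization of Theorem~\ref{t4.5}, choosing $\Phi(u,v)=\max\{u,v\}$. The first step is to observe that the definition of an additive metric space coincides exactly with the statement that $\Phi(u,v)=\max\{u,v\}$ is an additive quadruple function for $(X,d)$ in the sense of Definition~\ref{d2}: indeed, inequality~(\ref{e61}) reads $d(x,z)+d(t,y)\leqslant \max\{d(x,y)+d(t,z),d(x,t)+d(y,z)\}$, which is precisely (\ref{e2}) with this choice of $\Phi$. In the same way, the conclusion that $(Y,\rho)$ is an additive metric space amounts to $\Phi(u,v)=\max\{u,v\}$ being an additive quadruple function for $(Y,\rho)$.

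Next, I would verify that $\Phi(u,v)=\max\{u,v\}$ satisfies the hypotheses of Theorem~\ref{t4.5}. Symmetry, monotonicity in both arguments and $\Phi(0,0)=0$ are obvious. For condition~(\ref{le811}) we have $\Phi(0,a)=\max\{0,a\}=a\geqslant a$ for every $a>0$, so it holds with equality. For the homogeneity condition~(\ref{le511}), note that for $k>0$
\[
k\Phi(u,v)=k\max\{u,v\}=\max\{ku,kv\}=\Phi(ku,kv),
\]
which is immediate.

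Substituting $\Phi(u,v)=\max\{u,v\}$ into the implication~(\ref{e51})$\Rightarrow$(\ref{e52}) of Theorem~\ref{t4.5} turns the hypothesis into exactly the implication assumed in the corollary, since $\Phi\bigl(\tfrac{1}{t_1}+\tfrac{1}{t_2},\tfrac{1}{t_3}+\tfrac{1}{t_4}\bigr)=\max\bigl\{\tfrac{1}{t_1}+\tfrac{1}{t_2},\tfrac{1}{t_3}+\tfrac{1}{t_4}\bigr\}$, and similarly for the quantities involving $\eta$. With all the hypotheses of Theorem~\ref{t4.5} satisfied, we conclude that $\Phi(u,v)=\max\{u,v\}$ is an additive quadruple function for $(Y,\rho)$, i.e., $(Y,\rho)$ is additive.

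Since this is a routine specialization, I do not expect any genuine obstacle; the only mild point to be careful about is confirming that the two implications in Theorem~\ref{t4.5} and in the corollary really line up after the substitution, so that no spurious extra condition on $\eta$ is introduced. Apart from that, the proof is a one-line appeal to Theorem~\ref{t4.5}.
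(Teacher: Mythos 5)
Your proposal is correct and follows exactly the paper's proof, which simply sets $\Phi(u,v)=\max\{u,v\}$ in Theorem~\ref{t4.5}; your verification of conditions~(\ref{le811}) and~(\ref{le511}) and of the matching of the two implications fills in routine details the paper leaves implicit.
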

\begin{proof}
It suffices to set $\Phi(u,v)=\max\{u,v\}$ in Theorem~\ref{t4.5}.
\end{proof}

\begin{example}\label{e4.2}
If $\eta(t)=t$, then an $\eta$-quasisymmetric mapping preserves additive metrics.
\end{example}

\section{Quasim\"{o}bius mappings preserving the multiplicative quadruple inequality}

It is well-known that cross-ratios are invariant under M\"{o}bius transformations.
As a generalization of M\"{o}bius transformations in metric spaces V\"{a}is\"{a}l\"{a}~\cite{Va84}
introduced a class of quasim\"{o}bius maps, under which the cross
ratio is in a certain sense quasi-invariant. The introduction of quasim\"{o}bius maps
has provided a handy tool when studying the quasisymmetric maps and the quasiconformal
maps, see~\cite{WZ17} and references therein.

The tetrad in a semimetric space $(X,d)$ is a quadruple $T=(x,y,z,t)$ of mutually distinct points. The absolute cross-ratio of a tetrad $T$ is
$$
R(T)=R(x,y,z,t)=\frac{d(x,y)d(t,z)}{d(x,z)d(t,y)}.
$$

\begin{definition}
Let $(X,d)$, $(Y,\rho)$ be semimetric spaces. We shall say that a mapping $f\colon X\to Y$ is $\eta$-\emph{quasim\"{o}bius} if it is bijective and there is a homeomorphism $\eta\colon [0, \infty)\to [0,\infty)$ so that the estimate
 \begin{equation}\label{e43}
   R(fT)\leqslant \eta(R(T))
 \end{equation}
holds for any $T=(x,y,z,t)$ in $X$, where $fT=(f(x),f(y),f(z),f(t))$.
\end{definition}

In particular, $f$ is a \emph{m\"{o}bius} mapping if $R(fT)=R(T)$ for each tetrad $T$ in $X$.

\begin{theorem}\label{t3.5}
Let $(X,d)$, $(Y,\rho)$ be semimetric spaces, $\Phi$ be a multiplicative quadruple function for $(X,d)$ satisfying~(\ref{le811}) and such that
\begin{equation}\label{le411}
k\Phi(u,v)= \Phi(ku,kv) \, \text{ for all } \, k>0,
\end{equation}
and let $f\colon X\to Y$ be a bijective $\eta$-quasim\"{o}bius  mapping. If for every $t_1, t_2\in \RR^+\setminus\{0\}$ the inequality
\begin{equation}\label{le471}
1\leqslant \Phi\left(t_1, t_2\right)
\, \text{ implies } \,
1\leqslant \Phi\left(\frac{1}{\eta\left({t_1}^{-1}\right)},
\frac{1}{\eta\left({t_2}^{-1}\right)}\right),
\end{equation}
then $\Phi$ is  also a multiplicative quadruple function for $(Y,\rho)$.
\end{theorem}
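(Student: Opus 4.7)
The plan is to mirror the structure of the proof of Theorem~\ref{lp3.11}, but with the quasisymmetric estimates replaced by the cross-ratio estimates provided by quasim\"obiusness. We split into two cases depending on whether the four chosen points in $Y$ are distinct.

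For the case of four distinct points $x',y',z',t'\in Y$, set $x=f^{-1}(x')$, etc., which are also distinct since $f$ is a bijection. Starting from the multiplicative quadruple inequality~(\ref{e1}) in $(X,d)$, I would divide both sides by $d(x,z)d(t,y)>0$ and use homogeneity~(\ref{le411}) to obtain
\[
1\leqslant \Phi\left(\frac{d(x,y)d(t,z)}{d(x,z)d(t,y)},\ \frac{d(x,t)d(y,z)}{d(x,z)d(t,y)}\right).
\]
Setting $t_1=\frac{d(x,y)d(t,z)}{d(x,z)d(t,y)}$ and $t_2=\frac{d(x,t)d(y,z)}{d(x,z)d(t,y)}$, the hypothesis of~(\ref{le471}) is fulfilled, so we may conclude $1\leqslant \Phi\bigl(1/\eta(1/t_1),\,1/\eta(1/t_2)\bigr)$.

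The next key observation is that $1/t_1$ and $1/t_2$ are precisely absolute cross-ratios: $1/t_1 = R(x,z,y,t)$ and $1/t_2 = R(x,z,t,y)$. Applying the quasim\"obius estimate~(\ref{e43}) to the tetrads $(x,z,y,t)$ and $(x,z,t,y)$ yields
\[
\frac{\rho(x',z')\rho(t',y')}{\rho(x',y')\rho(t',z')}\leqslant \eta(1/t_1),\qquad \frac{\rho(x',z')\rho(y',t')}{\rho(x',t')\rho(y',z')}\leqslant \eta(1/t_2),
\]
which rearrange to lower bounds $\frac{1}{\eta(1/t_1)}\leqslant \frac{\rho(x',y')\rho(t',z')}{\rho(x',z')\rho(t',y')}$ and $\frac{1}{\eta(1/t_2)}\leqslant \frac{\rho(x',t')\rho(y',z')}{\rho(x',z')\rho(t',y')}$. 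By the monotonicity of $\Phi$ in each argument,
\[
1\leqslant \Phi\left(\frac{1}{\eta(1/t_1)},\frac{1}{\eta(1/t_2)}\right)\leqslant \Phi\left(\frac{\rho(x',y')\rho(t',z')}{\rho(x',z')\rho(t',y')},\ \frac{\rho(x',t')\rho(y',z')}{\rho(x',z')\rho(t',y')}\right),
\]
and multiplying through by $\rho(x',z')\rho(t',y')$ and applying homogeneity~(\ref{le411}) once more gives the desired multiplicative quadruple inequality~(\ref{e1}) for $x',y',z',t'$ in $(Y,\rho)$.

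The case where at least two of $x',y',z',t'$ coincide is handled directly by Lemma~\ref{l12}(i), since $\Phi$ satisfies~(\ref{le811}) and the codomain is a semimetric space. I do not foresee any serious obstacle; the main technical subtlety is simply identifying $1/t_1$ and $1/t_2$ as absolute cross-ratios of appropriate permutations of the tetrad, so that the quasim\"obius hypothesis can be applied in the right direction (the bound on $\rho$-cross-ratios yields a \emph{lower} bound on the arguments of $\Phi$ that we need, which is exactly what is exploited via monotonicity).
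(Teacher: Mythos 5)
Your proof is correct and follows essentially the same route as the paper's own proof: dividing the multiplicative quadruple inequality by $d(x,z)d(t,y)$, using homogeneity~(\ref{le411}), recognizing $1/t_1$ and $1/t_2$ as cross-ratios of permuted tetrads so that the quasim\"obius bound~(\ref{e43}) applies, then finishing via monotonicity of $\Phi$, the implication~(\ref{le471}), and Lemma~\ref{l12}(i) for the coinciding-points case. The only cosmetic difference is that you start from a quadruple in $Y$ and pull it back through $f^{-1}$, whereas the paper pushes a tetrad in $X$ forward; since $f$ is a bijection these are equivalent.
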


\begin{proof}
Let $T=(x,y,z,t)$ be a tetrad in $X$. Then since $f$ is a bijection  $fT=(f(x),f(y),f(z),f(t))= (x',y',z',t')$ is a tetrad in $Y$.

By~(\ref{e1}) and~(\ref{le411}) we have
\begin{equation*}
1\leqslant \Phi\left(\frac{d(x,y)}{d(x,z)}\frac{d(t,z)}{d(t,y)},\frac{d(x,t)}{d(x,z)}\frac{d(y,z)}{d(t,y)}\right).
\end{equation*}
Set
$$
t_1 = \frac{d(x,y)}{d(x,z)}\frac{d(t,z)}{d(t,y)}, \quad
t_2 = \frac{d(x,t)}{d(x,z)}\frac{d(y,z)}{d(t,y)}.
$$
Hence,
\begin{equation}\label{e491}
1\leqslant \Phi({t_1},{t_2}).
\end{equation}

By~(\ref{e43}) we have
$$
\frac{\rho(x',z')}{\rho(x',y')}\frac{\rho(t',y')}{\rho(t',z')}\leqslant \eta\left({t_1}^{-1}\right), \quad
\frac{\rho(x',z')}{\rho(x',t')}\frac{\rho(t',y')}{\rho(y',z')} \leqslant \eta\left({t_2}^{-1}\right).
$$
Hence,
$$
\Phi\left(\frac{1}{\eta\left({t_1}^{-1}\right)},
\frac{1}{\eta\left({t_2}^{-1}\right)}\right)
\leqslant \Phi\left(\frac{\rho(x',y')}{\rho(x',z')}\frac{\rho(t',z')}{\rho(t',y')},
\frac{\rho(x',t')}{\rho(x',z')}\frac{\rho(y',z')}{\rho(t',y')}\right).
$$
Using~(\ref{e491}), implication~(\ref{le471}) and~(\ref{le411}) we obtain inequality~(\ref{e1}) for  $x', y', z', t'$.

The case when among the points $x', y', z', t'$ at least two points coincide follows from Lemma~\ref{l12} (i),  which completes the proof.
\end{proof}

\begin{corollary}\label{p27}
Let $(X,d)$ be a Ptolemaic space, $(Y,\rho)$ be a semimetric space and let $f\colon X\to Y$ be a bijective $\eta$-quasim\"{o}bius  mapping. If for every $t_1, t_2\in \RR^+\setminus \{0\}$ the inequality
\begin{equation}\label{e24}
1 \leqslant t_1+t_2
\, \text{ implies } \,
1\leqslant \frac{1}{\eta\left({t_1}^{-1}\right)}+
\frac{1}{\eta\left({t_2}^{-1}\right)},
\end{equation}
then $(Y,\rho)$ is also Ptolemaic.
\end{corollary}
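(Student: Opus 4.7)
The plan is to deduce this corollary directly from Theorem~\ref{t3.5} by specializing the multiplicative quadruple function to $\Phi(u,v) = u+v$. First I would recall that $(X,d)$ being Ptolemaic means exactly that inequality~(\ref{e270}) holds for all quadruples of points, and this is precisely the statement that $\Phi(u,v) = u+v$ is a multiplicative quadruple function for $(X,d)$ in the sense of Definition~\ref{d1}. Similarly, showing that $(Y,\rho)$ is Ptolemaic is equivalent to showing that this same $\Phi$ is a multiplicative quadruple function for $(Y,\rho)$.

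Next I would verify the hypotheses of Theorem~\ref{t3.5} for this choice of $\Phi$. The function $\Phi(u,v) = u+v$ is obviously symmetric, monotone increasing in both arguments, and satisfies $\Phi(0,0) = 0$. Condition~(\ref{le811}) reduces to $a \leqslant 0 + a$, which is trivially true for all $a>0$. Condition~(\ref{le411}) becomes $k(u+v) = ku+kv$, which is just distributivity and therefore holds for all $k>0$.

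It remains to check that implication~(\ref{le471}) of Theorem~\ref{t3.5} specializes to the hypothesis~(\ref{e24}) of the corollary. With $\Phi(u,v) = u+v$, the premise $1 \leqslant \Phi(t_1,t_2)$ becomes $1 \leqslant t_1 + t_2$, and the conclusion $1 \leqslant \Phi\bigl(\tfrac{1}{\eta(t_1^{-1})}, \tfrac{1}{\eta(t_2^{-1})}\bigr)$ becomes $1 \leqslant \tfrac{1}{\eta(t_1^{-1})} + \tfrac{1}{\eta(t_2^{-1})}$, matching~(\ref{e24}) verbatim.

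Since every hypothesis of Theorem~\ref{t3.5} holds, the theorem yields that $\Phi(u,v) = u+v$ is a multiplicative quadruple function for $(Y,\rho)$, which by Definition~\ref{d1} means $(Y,\rho)$ is Ptolemaic. There is no real obstacle here since the corollary is a direct specialization; the only thing to be slightly careful about is that the bijectivity of $f$ and the $\eta$-quasimöbius property pass through unchanged, and these are assumed in the statement.
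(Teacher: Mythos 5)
Your proof is correct and is exactly the paper's argument: the paper's entire proof reads ``It suffices to set $\Phi(u,v)=u+v$ in Theorem~\ref{t3.5}.'' Your additional verifications --- that Ptolemy's inequality is the multiplicative quadruple inequality for $\Phi(u,v)=u+v$, that conditions~(\ref{le811}) and~(\ref{le411}) hold trivially, and that~(\ref{le471}) specializes to~(\ref{e24}) --- are precisely the routine checks the paper leaves implicit.
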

\begin{proof}
It suffices to set $\Phi(u,v)=u+v$ in Theorem~\ref{t3.5}.
\end{proof}

\begin{example}\label{c214}
Let $X$ be a Ptolemaic space, $Y$ be a semimetric space and let $f\colon X\to Y$ be a bijective $\eta$-quasisym\"{o}bius mapping such that $\eta(t)=t^{\alpha}$, $0<\alpha\leqslant 1$. Then $Y$ is also a Ptolemaic space. Indeed, the first inequality in~(\ref{e24}), the monotonicity of $\eta$ and Lemma~\ref{l3.1} imply
\begin{equation*}
1\leqslant (t_1+t_2)^{\alpha}\leqslant t_1^{\alpha}+t_2^{\alpha}=\frac{1}{\eta\left({t_1}^{-1}\right)}+
\frac{1}{\eta\left({t_2}^{-1}\right)},
\end{equation*}
which by Corollary~\ref{p27} completes the proof.
\end{example}

\bigskip

CONTACT INFORMATION

\medskip

Evgeniy Aleksandrovych Petrov \\
Institute of Applied Mathematics and Mechanics of the NAS of Ukraine, Slavyansk, Ukraine \\
E-Mail: eugeniy.petrov@gmail.com

\medskip

Ruslan Radikovych Salimov\\
Institute of Mathematics of the NAS of Ukraine, Kyiv, Ukraine\\
E-Mail: ruslan.salimov1@gmail.com
\end{document}